\newtheorem{theorem}{Theorem}
\newtheorem{lemma}[theorem]{Lemma}
\newtheorem{corollary}[theorem]{Corollary}
\newcommand{\figuurbase}[3]{
 \begin{figure}[ht]
 \begin{center}
   \fbox{\parbox{12cm}{
     \centering
     #1
     \caption{#2}
     \label{#3}
   }}
 \end{center}
 \end{figure}
}
\newcommand{\Z}{\mathbb{Z}}
\title{A combinatorial solution to LIMO 2010 question 10}
\author{Stijn Vermeeren\footnote{Thanks to my brother Mats Vermeeren, who was a member of the team from the Katholieke Universiteit Leuven that won the LIMO 2010. Through him I was introduced to this problem.} \ (University of Leeds)}
\date{\today}
\begin{document}

\maketitle

\begin{abstract}
Problem 10 of the Landelijke Interuniversitaire Mathematische Olympiade 2010 asks for a proof that all matrices in a certain family are nilpotent. Both model solutions prove this using the Cayley-Hamilton theorem. I give a purely combinatorial proof.
\end{abstract}

The LIMO (Landelijke Interuniversitaire Mathematische Olympiade) is a mathematics competition that is organized annually in the Netherlands \cite{limosite}. Mathematics students from the Netherlands and from Flanders participate in teams. The 2010 competition was held on May 28 in Utrecht. Problem number 10 was proposed by Jaap Top from the University of Groningen and reads as follows:

\begin{quotation}
Let $m$ be a positive integer and $n = 2^m - 1$. The $n \times n$ matrix $A = (a_{i,j})$ over $\Z / 2\Z$ is given by $a_{i,j} = 1$ if $|i - j| = 1$ and $a_{i,j} = 0$ otherwise. Show that $A$ is nilpotent.

\textbf{Remark:} \emph{A matrix $A$ is nilpotent if some power of $A$ equals the zero matrix.}
\end{quotation}

Two model solutions were given \cite{uitwerkingen}, proving in particular that \mbox{$A^n = \mathbf{0}$}. Both model solutions use the Cayley-Hamilton theorem: every square matrix over a commutative ring is a \emph{root} (in the obvious sense) of its own characteristic polynomial. The rest of the solution is to ingeniously calculate that characteristic polynomial, which turns out to be equal to $\lambda^n$.

However, it is striking that $A$ is the adjacency matrix of the path graph $P_n$. And it is well known that the entries of the $k$'th power of an adjacency matrix correspond to the number of walks of length $k$ between two particular vertices of the graph. This suggests a combinatorial approach to the problem.

\section{Background from graph theory}

A \textbf{graph} $G$ consists of a finite set $V$ of \textbf{vertices} and a set $E$ of \textbf{edges}. Every edge is a (unordered) pair of vertices. If $\{x,y\}$ is an edge of $G$, then we say that the vertices $x$ and $y$ are \textbf{connected} by this edge. (Note that a vertex is \emph{not} connected to itself.) A \textbf{walk} in $G$ is a finite sequence $x_0, x_1, \ldots, x_k$ of vertices, such that every vertex in the sequence is connected to the next one. Vertex $x_0$ is the \textbf{start}, vertex $x_n$ is the \textbf{end} and $k$ is the \textbf{length} of the walk.

Let $n$ be a positive integer. The \textbf{path graph} $P_n$ is the graph with vertex set $\{1,2,\cdots, n\}$ and in which two integers are connected if and only if they are consecutive. The path graph $P_7$ for example, can be depicted as follows:

  \figuurbase{ \psset{unit=0.8cm}
\begin{pspicture}(0.7,0)(8,2)

{
\cnode(1,1){2.3pt}{n1}
\uput{4.5pt}[u](n1){$1$}
\cnode(2,1){2.3pt}{n2}
\uput{4.5pt}[u](n2){$2$}
\cnode(3,1){2.3pt}{n3}
\uput{4.5pt}[u](n3){$3$}
\cnode(4,1){2.3pt}{n4}
\uput{4.5pt}[u](n4){$4$}
\cnode(5,1){2.3pt}{n5}
\uput{4.5pt}[u](n5){$5$}
\cnode(6,1){2.3pt}{n6}
\uput{4.5pt}[u](n6){$6$}
\cnode(7,1){2.3pt}{n7}
\uput{4.5pt}[u](n7){$7$}
}

\ncline{n1}{n2}
\ncline{n2}{n3}
\ncline{n3}{n4}
\ncline{n4}{n5}
\ncline{n5}{n6}
\ncline{n6}{n7}

\end{pspicture}
 }{The path graph $P_7$.}{p7}

Let $G$ be a graph with $n$ vertices. We assume from now on that the vertices are always numbered from $1$ to $n$. The \textbf{adjacency matrix} of $G$ is the $n \times n$ matrix A with 
\[
a_{x,y} =
	\begin{cases}
		1 & \mbox{if the vertices $x$ are $y$ connected,} \\
		0 & \mbox{otherwise.}
	\end{cases}
\]

For example, the path graph $P_7$ has adjacency matrix
\[
\begin{pmatrix}
	0 & 1 & 0 & 0 & 0 & 0 & 0 \\
	1 & 0 & 1 & 0 & 0 & 0 & 0 \\
	0 & 1 & 0 & 1 & 0 & 0 & 0 \\
	0 & 0 & 1 & 0 & 1 & 0 & 0 \\
	0 & 0 & 0 & 1 & 0 & 1 & 0 \\
	0 & 0 & 0 & 0 & 1 & 0 & 1 \\
	0 & 0 & 0 & 0 & 0 & 1 & 0
\end{pmatrix}.
\]

\begin{lemma}
\label{lemma}
Let $G$ be a graph with $n$ vertices and adjacency matrix $A$. Let $k$ be a positive integer and let $B = A^k$. Then $b_{x,y}$ equals the number of walks of length $k$ that start in vertex $x$ and end in vertex $y$.
\end{lemma}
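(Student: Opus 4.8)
The plan is to prove this by induction on $k$, using the recursive structure of matrix multiplication $A^k = A^{k-1} \cdot A$ together with the combinatorial observation that every walk of length $k$ decomposes uniquely into a walk of length $k-1$ followed by a single final edge. The base case $k=1$ is immediate: $b_{x,y} = a_{x,y}$ is $1$ if $x$ and $y$ are connected and $0$ otherwise, which is exactly the count of length-one walks from $x$ to $y$ (there is one such walk, namely $x,y$, precisely when $\{x,y\}$ is an edge).

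For the inductive step, assume the claim holds for $k-1$, and write $C = A^{k-1}$, so that $c_{x,z}$ counts the walks of length $k-1$ from $x$ to $z$. Since $B = A^k = C A$, the definition of matrix product gives $b_{x,y} = \sum_{z=1}^n c_{x,z} a_{z,y}$. I would then interpret this sum combinatorially: a walk of length $k$ from $x$ to $y$ is a sequence $x = x_0, x_1, \ldots, x_{k-1}, x_k = y$, which is the same data as a walk of length $k-1$ from $x$ to some vertex $z = x_{k-1}$ (contributing the factor $c_{x,z}$) together with an edge from $z$ to $y$ (contributing the factor $a_{z,y} \in \{0,1\}$, which is $1$ exactly when such an edge exists). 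Summing over all possible penultimate vertices $z$ partitions the set of length-$k$ walks from $x$ to $y$ by their second-to-last vertex, so the sum $\sum_z c_{x,z} a_{z,y}$ counts exactly these walks.

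The only mild subtlety to be careful about is the bookkeeping of the decomposition: I should check that the map sending a length-$k$ walk to the pair (its initial length-$(k-1)$ walk, its last edge) is a bijection onto the set of such pairs, so that no walk is counted twice and none is omitted. This is where the argument genuinely uses that walks allow repeated vertices and repeated edges — there is no constraint linking the final edge to the earlier part of the walk, so the count factors cleanly. This step is routine once stated carefully, so I do not expect a real obstacle; the proof is essentially a translation between the algebraic recursion and the combinatorial recursion.

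Alternatively, one could avoid induction entirely by expanding $A^k$ directly: $b_{x,y} = \sum a_{x,i_1} a_{i_1,i_2} \cdots a_{i_{k-1},y}$, where the sum ranges over all tuples $(i_1,\ldots,i_{k-1}) \in \{1,\ldots,n\}^{k-1}$. Each summand is a product of $0$s and $1$s, hence equals $1$ precisely when $a_{x,i_1} = a_{i_1,i_2} = \cdots = a_{i_{k-1},y} = 1$, i.e.\ precisely when $x, i_1, \ldots, i_{k-1}, y$ is a walk of length $k$; summing over all tuples then counts exactly the walks of length $k$ from $x$ to $y$. I would likely present the inductive version as the main argument since it is cleaner, but mention this direct expansion as motivation.
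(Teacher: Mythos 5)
Your proof is correct and follows essentially the same route as the paper: induction on $k$ with base case $k=1$ given by the definition of the adjacency matrix, and the inductive step interpreting $b_{x,y} = \sum_z c_{x,z} a_{z,y}$ as a count of length-$k$ walks partitioned by their penultimate vertex. The direct expansion you mention at the end is a fine alternative, but the main argument matches the paper's proof.
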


\begin{proof}
We prove be induction on $k$. The case $k=1$ is satisfied by definition of adjacency matrix. Indeed, the unique walk of length $1$ from vertex $x$ to vertex $y$ exists only if $x$ and $y$ are connected.

Suppose now that the lemma is correct for $k=l$. Let $C = A^l$ and consider $B = A^{l+1} = C \cdot A$. By definition of matrix multiplication we have
\[ b_{x,y} = \sum_{z=1}^n c_{x,z} \cdot a_{z,y}. \]
Here $c_{x,z}$ is the number of walks of length $l$ from $x$ to $z$ by induction hypothesis. Moreover $a_{z,y}$ equals $1$ if $z$ are $y$ connected, and $0$ otherwise, by definition of adjacency matrix. So $b_{x,y}$ is the sum of the number of walks of length $l$ from $x$ to $z$ over all vertices $z$ that are connected to $y$. This is the number of walks of length $l+1$ from $x$ to $y$, summed over the penultimate vertex of the walk. Hence the lemma is also satisfied for $k=l+1$.

By induction, the lemma is proven for all positive integers $k$.
\end{proof}

\section{A combinatorial solution}

\begin{theorem}
Let $m$ be a positive integer, $n = 2^m - 1$ and $k$ an integer with $k \geq n$. Take any two vertices $x$ and $y$ of the path graph $P_n$. The number of walks in $P_n$ of length $k$ from $x$ to $y$ is even.
\end{theorem}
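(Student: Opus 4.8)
The plan is to forget the matrix entirely and count the walks directly as lattice paths, then read off the parity of the binomial sum that comes out.

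\textbf{Reducing to a binomial sum.} A walk of length $k$ from $x$ to $y$ in $P_n$ is exactly a sequence of $\pm 1$ steps on $\Z$ that starts at $x$, ends at $y$, and never leaves $\{1,\dots,n\}$; equivalently, a lattice path that avoids the two ``walls'' $0$ and $N:=n+1=2^m$. The standard tool for such counts is the reflection principle (the method of images): reflecting a path at the first wall it meets sets up a sign-reversing involution on the paths that do touch a wall, so the number of wall-avoiding paths equals the alternating sum, over the orbit of $y$ under the infinite dihedral group generated by the reflections $t\mapsto -t$ and $t\mapsto 2N-t$, of the unrestricted path counts $\binom{k}{\,\cdot\,}$. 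Since that orbit is $\{\,2jN+y : j\in\Z\,\}\cup\{\,2jN-y : j\in\Z\,\}$, the first family carrying sign $+1$ and the second sign $-1$, this yields (writing $\binom{k}{i}=0$ for $i\notin\{0,\dots,k\}$, and noting that the count is $0$ --- hence even --- unless $k\equiv y-x\pmod 2$)
\[ W(k;x,y)=\sum_{j\in\Z}\binom{k}{\tfrac{k+y-x}{2}+j2^m}\;-\;\sum_{j\in\Z}\binom{k}{\tfrac{k-x-y}{2}+j2^m}. \]
Equivalently, one may ``unfold'' each walk onto the cycle $C_{2^{m+1}}$ on $\Z/2^{m+1}\Z$, where there are no walls and the two sums are visibly two entries of a power of the cycle's adjacency matrix.

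\textbf{Determining the parity.} For any integer $c$, the sum $\sum_{j\in\Z}\binom{k}{c+j2^m}$ collects the coefficients of all monomials $t^{i}$ of $(1+t)^{k}$ with $i\equiv c\pmod{2^m}$; hence it equals the coefficient of $t^{\,c\bmod 2^m}$ in the reduction of $(1+t)^{k}$ modulo $t^{2^m}-1$. Over $\Z/2\Z$ we have $t^{2^m}-1=t^{2^m}+1=(1+t)^{2^m}$, so this is the reduction of $(1+t)^{k}$ modulo $(1+t)^{2^m}$. If $k\ge 2^m$ that reduction is $0$, so both sums above are even. If $k=2^m-1$, then over $\Z/2\Z$ one has $(1+t)^{2^m-1}=(1+t^{2^m})/(1+t)=1+t+t^2+\cdots+t^{2^m-1}$, every coefficient of which is $1$; so each of the two sums is $\equiv 1\pmod 2$, and they cancel. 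Since $k\ge n$ means $k=2^m-1$ or $k\ge 2^m$, in every case $W(k;x,y)$ is even, which is the theorem.

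\textbf{Where the difficulty lies.} The parity computation is short once the first step is available; the real work is setting up the two-wall reflection principle rigorously --- defining the sign-reversing involution, checking it has no fixed points among the wall-touching paths, and confirming that the surviving terms are exactly the claimed signed orbit sum with the right parity constraint. (If one prefers to avoid the reflection principle, an alternative is induction on $m$, splitting $P_{2^{m+1}-1}$ at its central vertex $2^m$ into two copies of $P_{2^m-1}$ and analysing how a walk returns to that vertex; but the bookkeeping there looks heavier than a clean reflection argument.)
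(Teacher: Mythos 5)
Your argument is correct, but it is a genuinely different proof from the paper's. The paper argues by induction on $m$: it splits $P_{2^{l+1}-1}$ at the middle vertex $2^{l}$, classifies walks by the number of visits to that vertex, applies the induction hypothesis to the two halves (each isomorphic to $P_{2^{l}-1}$) for the classes with at most one visit, and pairs up the remaining walks by reflecting the segment between the first two visits --- precisely the alternative you dismiss at the end as heavier bookkeeping, though in the paper it is in fact quite short. You instead quote the classical two-barrier reflection principle (method of images, as in the ruin problem with two absorbing walls) to write the number of confined walks as the signed orbit sum $\sum_{j}\binom{k}{\frac{k+y-x}{2}+j2^{m}}-\sum_{j}\binom{k}{\frac{k-x-y}{2}+j2^{m}}$, and then read off parities from $(1+t)^{k}$ modulo $t^{2^{m}}-1$ over $\Z/2\Z$, using $(1+t)^{2^{m}}=1+t^{2^{m}}$; both cases check out ($k\geq 2^{m}$: each image sum is even; $k=2^{m}-1$: each image sum is odd, so the two cancel), as does the remark that the count vanishes when $k\not\equiv y-x\pmod 2$. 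The one ingredient you leave unproved is the two-wall reflection identity itself; it is standard and you state it correctly, but writing it out carefully (reflect the portion of a path after its first wall contact, note that this toggles the sign of the image of $y$ while fixing the first contact, and that every image other than $y$ itself lies outside the strip, so only wall-avoiding paths to $y$ survive unpaired) takes roughly as much space as the paper's entire induction. What your route buys is insight: it isolates exactly why $n=2^{m}-1$ works (all coefficients of $(1+t)^{2^{m}-1}$ are odd, a Lucas/Kummer-type fact), shows that for $k\geq 2^{m}$ each image sum is individually even, and makes the sharpness of the bound $k\geq n$ transparent; what the paper's route buys is a fully self-contained elementary argument, using nothing beyond splitting a walk at the midpoint, with no appeal to a classical counting formula or to generating functions.
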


\begin{proof}
We give a proof by induction on $m$.

The case $m=1$ (and hence $n=1$) is trivial, as there are no walks of positive length in $P_1$. 

So suppose the theorem is true for $m=l$ and consider $n=2^{l+1}-1$ and $k \geq 2^{l+1}-1$. Let $x$ and $y$ be two vertices of $P_n$. We partition the walks of length $k$ from $x$ to $y$ in three classes:
\begin{description}
	\item[\textbf{Class 1:}] the walks that never visit $2^l$.
	\item[\textbf{Class 2:}] the walks that visit $2^l$ exactly once.
	\item[\textbf{Class 3:}] the walks that visit $2^l$ at least twice.
\end{description}
We will prove that each of these classes contains an even number of walks, which immediately implies that the theorem is also true for $m=l+1$.

Essential in applying the induction hypothesis is the fact that the two subgraphs of $P_{2^{l+1}-1}$ which are to the left and to the right of vertex $2^l$, are isomorphic to $P_{2^l-1}$. Hence, the walks in $P_{2^{l+1}-1}$ that are completely on the left (or on the right) of $2^l$ correspond to the walks in $P_{2^{l}-1}$.

The three cases are summarized in Figure \ref{drieklassen}.

  \figuurbase{ \psset{unit=0.5cm}
\begin{pspicture}(0.7,-2.5)(22,4)

\rput(0,0){

	\rput(4,3){\textbf{Class 1}}

	\cnode(1,1){2.3pt}{n1}
	\uput{4.5pt}[u](n1){$1$}
	\cnode(2,1){2.3pt}{n2}
	\uput{4.5pt}[u](n2){$2$}
	\cnode(3,1){2.3pt}{n3}
	\uput{4.5pt}[u](n3){$3$}
	\cnode(4,1){2.3pt}{n4}
	\uput{4.5pt}[u](n4){$4$}
	\cnode(5,1){2.3pt}{n5}
	\uput{4.5pt}[u](n5){$5$}
	\cnode(6,1){2.3pt}{n6}
	\uput{4.5pt}[u](n6){$6$}
	\cnode(7,1){2.3pt}{n7}
	\uput{4.5pt}[u](n7){$7$}

	\ncline{n1}{n2}
	\ncline{n2}{n3}
	\ncline{n3}{n4}
	\ncline{n4}{n5}
	\ncline{n5}{n6}
	\ncline{n6}{n7}

	\rput(n1){\psline[linestyle=dotted](0,-3.5)}
	\rput(n2){\psline[linestyle=dotted](0,-3.5)}
	\rput(n3){\psline[linestyle=dotted](0,-3.5)}
	\rput(n4){\psline[linestyle=dotted,linewidth=1.6pt](0,-3.5)}
	\rput(n5){\psline[linestyle=dotted](0,-3.5)}
	\rput(n7){\psline[linestyle=dotted](0,-3.5)}
	\rput(n6){\psline[linestyle=dotted](0,-3.5)}

	\pnode(3,0.33){p1}
	\pnode(2,0){p2}
	\pnode(3,-0.33){p3}
	\pnode(2,-0.66){p4}
	\pnode(1,-1){p5}
	\pnode(2,-1.33){p6}
	\pnode(3,-1.66){p7}
	\pnode(2,-2){p8}

	\ncline[arrowsize=2.7pt,nodesep=1pt]{->}{p1}{p2}
	\ncline[arrowsize=2.7pt,nodesep=1pt]{->}{p2}{p3}
	\ncline[arrowsize=2.7pt,nodesep=1pt]{->}{p3}{p4}
	\ncline[arrowsize=2.7pt,nodesep=1pt]{->}{p4}{p5}
	\ncline[arrowsize=2.7pt,nodesep=1pt]{->}{p5}{p6}
	\ncline[arrowsize=2.7pt,nodesep=1pt]{->}{p6}{p7}
	\ncline[arrowsize=2.7pt,nodesep=1pt]{->}{p7}{p8}

	\psframe(0.8,0.53)(3.2,-2.2)
	\rput*(2.4,-2.6){\tiny \bf Induction hypothesis}
	
}

\rput(7.5,0){

	\rput(4,3){\textbf{Class 2}}
	
	\cnode(1,1){2.3pt}{n1}
	\uput{4.5pt}[u](n1){$1$}
	\cnode(2,1){2.3pt}{n2}
	\uput{4.5pt}[u](n2){$2$}
	\cnode(3,1){2.3pt}{n3}
	\uput{4.5pt}[u](n3){$3$}
	\cnode(4,1){2.3pt}{n4}
	\uput{4.5pt}[u](n4){$4$}
	\cnode(5,1){2.3pt}{n5}
	\uput{4.5pt}[u](n5){$5$}
	\cnode(6,1){2.3pt}{n6}
	\uput{4.5pt}[u](n6){$6$}
	\cnode(7,1){2.3pt}{n7}
	\uput{4.5pt}[u](n7){$7$}

	\ncline{n1}{n2}
	\ncline{n2}{n3}
	\ncline{n3}{n4}
	\ncline{n4}{n5}
	\ncline{n5}{n6}
	\ncline{n6}{n7}

	\rput(n1){\psline[linestyle=dotted](0,-3.5)}
	\rput(n2){\psline[linestyle=dotted](0,-3.5)}
	\rput(n3){\psline[linestyle=dotted](0,-3.5)}
	\rput(n4){\psline[linestyle=dotted,linewidth=1.6pt](0,-3.5)}
	\rput(n5){\psline[linestyle=dotted](0,-3.5)}
	\rput(n7){\psline[linestyle=dotted](0,-3.5)}
	\rput(n6){\psline[linestyle=dotted](0,-3.5)}

	\pnode(3,0.33){p1}
	\pnode(2,0){p2}
	\pnode(3,-0.33){p3}
	\pnode(4,-0.66){p4}
	\pnode(3,-1){p5}
	\pnode(2,-1.33){p6}
	\pnode(1,-1.66){p7}
	\pnode(2,-2){p8}

	\ncline[arrowsize=2.7pt,nodesep=1pt]{->}{p1}{p2}
	\ncline[arrowsize=2.7pt,nodesep=1pt]{->}{p2}{p3}
	\ncline[arrowsize=2.7pt,nodesep=1pt]{->}{p3}{p4}
	\ncline[arrowsize=2.7pt,nodesep=1pt]{->}{p4}{p5}
	\ncline[arrowsize=2.7pt,nodesep=1pt]{->}{p5}{p6}
	\ncline[arrowsize=2.7pt,nodesep=1pt]{->}{p6}{p7}
	\ncline[arrowsize=2.7pt,nodesep=1pt]{->}{p7}{p8}

	\psframe(0.8,-.8)(3.2,-2.2)
	\rput*(2.4,-2.6){\tiny \bf Induction hypothesis}

}

\rput(15,0){

	\rput(4,3){\textbf{Class 3}}

	\cnode(1,1){2.3pt}{n1}
	\uput{4.5pt}[u](n1){$1$}
	\cnode(2,1){2.3pt}{n2}
	\uput{4.5pt}[u](n2){$2$}
	\cnode(3,1){2.3pt}{n3}
	\uput{4.5pt}[u](n3){$3$}
	\cnode(4,1){2.3pt}{n4}
	\uput{4.5pt}[u](n4){$4$}
	\cnode(5,1){2.3pt}{n5}
	\uput{4.5pt}[u](n5){$5$}
	\cnode(6,1){2.3pt}{n6}
	\uput{4.5pt}[u](n6){$6$}
	\cnode(7,1){2.3pt}{n7}
	\uput{4.5pt}[u](n7){$7$}

	\ncline{n1}{n2}
	\ncline{n2}{n3}
	\ncline{n3}{n4}
	\ncline{n4}{n5}
	\ncline{n5}{n6}
	\ncline{n6}{n7}

	\psline[linestyle=dotted](1,0.7)(1,-2.5)
	\psline[linestyle=dotted](2,0.7)(2,-2.5)
	\psline[linestyle=dotted](3,0.7)(3,-2.5)
	\psline[linestyle=dotted,linewidth=1.6pt](4,0.7)(4,-2.5)
	\psline[linestyle=dotted](5,0.7)(5,-2.5)
	\psline[linestyle=dotted](6,0.7)(6,-2.5)
	\psline[linestyle=dotted](7,0.7)(7,-2.5)
	
	\pnode(3,0.33){p1}
	\pnode(4,0){p2}
	\pnode(5,-0.33){p3}
	\pnode(3,-0.33){q3}
	\pnode(4,-0.66){p4}
	\pnode(5,-1){p5}
	\pnode(4,-1.33){p6}
	\pnode(3,-1.66){p7}
	\pnode(2,-2){p8}

	\ncline[arrowsize=2.7pt,nodesep=1pt]{->}{p1}{p2}
	\ncline[arrowsize=2.7pt,nodesep=1pt,linestyle=dashed,dash=1.3pt 1.3pt]{->}{p2}{p3}
	\ncline[arrowsize=2.7pt,nodesep=1pt,linestyle=dashed,dash=1.3pt 1.3pt]{->}{p2}{q3}
	\ncline[arrowsize=2.7pt,nodesep=1pt,linestyle=dashed,dash=1.3pt 1.3pt]{->}{p3}{p4}
	\ncline[arrowsize=2.7pt,nodesep=1pt,linestyle=dashed,dash=1.3pt 1.3pt]{->}{q3}{p4}
	\ncline[arrowsize=2.7pt,nodesep=1pt]{->}{p4}{p5}
	\ncline[arrowsize=2.7pt,nodesep=1pt]{->}{p5}{p6}
	\ncline[arrowsize=2.7pt,nodesep=1pt]{->}{p6}{p7}
	\ncline[arrowsize=2.7pt,nodesep=1pt]{->}{p7}{p8}
	
	\uput*{3.0pt}[r](5,-.33){\tiny \bf Reflection}
}

\end{pspicture}
 }{An illustration of how the parity of the number of walks in each class is counted. Here we have $m=3$, $n=7$, $k=7$, $x=3$ and $y=2$.}{drieklassen}

\begin{description}
	\item[\textbf{Class 1:}] In case $x$ and $y$ are on different sides of $2^l$, there are no walks in this class. If $x$ and $y$ are on the same side of $2^l$, then the walks in this class are exactly the walks of length $k$ from $x$ to $y$ in the subgraph of $P_{2^{l+1}-1}$ which is on that side of $2^l$. This subgraph is isomorphic to $P_{2^{l}-1}$, so by induction hypothesis the number of these walks is even.
	\item[\textbf{Class 2:}] We claim that for all $i \in \{0,1,2,\cdots,k\}$, there's an even number of walks in this class that visit $2^l$ after exactly $i$ steps. A such walk
\[ x, a_1, \cdots, a_{i-1}, 2^l, a_{i+1}, \cdots, a_{k-1},y \]
is determined by two subwalks, each completely at one side of $2^l$: one walk of length $i-1$ from $x$ to $a_{i_1}$ and one walk of length $k-i-1$ from $a_{i+1}$ to $y$. (If $i=0$ or $i=k$, the walk is determined by \emph{one} such subwalk of length $k-1$.) Since \[k \geq 2^{l+1} - 1\] we have either \[ i-1 \geq 2^l - 1 \] or \[ k-i-1 \geq 2^l - 1.\] By induction hypothesis, we can choose one of these subwalks in an even number of ways. Therefore the total number of such walks is also even.
	\item[\textbf{Class 3:}] In the walks in this class, we can \emph{reflect} the subwalk between the first to occurrences of $2^l$. To reflect here means: each step to the \emph{right} (i.e. from $i$ to $i+1$) is replaced by a step to the \emph{left} (i.e. from $i$ to $i-1$) and the other way around. Like this we obtain a different walk in this class. Furthermore, on applying this operation twice, we return to the original walk. Hence we can partition the walks in pairs (a walk together with its \emph{mirror image}), so the number of walks in this class is even.
\end{description}
By induction the theorem is valid for all positive integers $m$, as required.
\end{proof}

Note that the bound $k \geq n$ is optimal. Indeed, there is only one walk in $P_n$ of length $n-1$ from vertex $1$ to vertex $n$.

Also note that it is not possible to simply consider the first vertex $x$ with maximal $2$-exponent (i.e. the exponent of $2$ in the prime factorization) which is visited at least twice in the walk, and to reflect the subwalk between the first two occurrences of $x$. Like that, $x$ remains the first vertex with maximal $2$-exponent that is visited twice in the new walk, so that reflecting twice takes us back to the original walk. But the reflection is not always well-defined, as is illustrated in Figure \ref{tegenvb}. The above proof avoids this problem by splitting the walk into two subwalks where the walk visits $4$, whereby the subwalk that Figuur \ref{tegenvb} tried to reflect is broken in two (see the middle picture in Figure \ref{drieklassen}).

  \figuurbase{ \psset{unit=0.8cm}
\begin{pspicture}(-.5,-2.5)(8,2)

{
\cnode(1,1){2.3pt}{n1}
\uput{4.5pt}[u](n1){$1$}
\cnode(2,1){2.3pt}{n2}
\uput{4.5pt}[u](n2){$2$}
\cnode(3,1){2.3pt}{n3}
\uput{4.5pt}[u](n3){$3$}
\cnode(4,1){2.3pt}{n4}
\uput{4.5pt}[u](n4){$4$}
\cnode(5,1){2.3pt}{n5}
\uput{4.5pt}[u](n5){$5$}
\cnode(6,1){2.3pt}{n6}
\uput{4.5pt}[u](n6){$6$}
\cnode(7,1){2.3pt}{n7}
\uput{4.5pt}[u](n7){$7$}
}

\ncline{n1}{n2}
\ncline{n2}{n3}
\ncline{n3}{n4}
\ncline{n4}{n5}
\ncline{n5}{n6}
\ncline{n6}{n7}

\rput(n1){\psline[linestyle=dotted](0,-3.5)}
\rput(n2){\psline[linestyle=dotted,linewidth=1.6pt](0,-3.5)}
\rput(n3){\psline[linestyle=dotted](0,-3.5)}
\rput(n4){\psline[linestyle=dotted](0,-3.5)}
\rput(n5){\psline[linestyle=dotted](0,-3.5)}
\rput(n7){\psline[linestyle=dotted](0,-3.5)}
\rput(n6){\psline[linestyle=dotted](0,-3.5)}

\pnode(3,0.33){p1}
\pnode(2,0){p2}
\pnode(3,-0.33){p3}
\pnode(1,-0.33){q3}
\pnode(4,-0.66){p4}
\pnode(0,-0.66){q4}
\pnode(3,-1){p5}
\pnode(1,-1){q5}
\pnode(2,-1.33){p6}
\pnode(1,-1.66){p7}
\pnode(2,-2){p8}

\ncline[arrowsize=2.7pt,nodesep=1pt]{->}{p1}{p2}
\ncline[arrowsize=2.7pt,nodesep=1pt]{->}{p2}{p3}
\ncline[arrowsize=2.7pt,nodesep=1pt,linestyle=dashed,dash=1.3pt 1.3pt]{->}{p2}{q3}
\ncline[arrowsize=2.7pt,nodesep=1pt]{->}{p3}{p4}
\ncline[arrowsize=2.7pt,nodesep=1pt,linestyle=dashed,dash=1.3pt 1.3pt]{->}{q3}{q4}
\ncline[arrowsize=2.7pt,nodesep=1pt]{->}{p4}{p5}
\ncline[arrowsize=2.7pt,nodesep=1pt,linestyle=dashed,dash=1.3pt 1.3pt]{->}{q4}{q5}
\ncline[arrowsize=2.7pt,nodesep=1pt]{->}{p5}{p6}
\ncline[arrowsize=2.7pt,nodesep=1pt,linestyle=dashed,dash=1.3pt 1.3pt]{->}{q5}{p6}
\ncline[arrowsize=2.7pt,nodesep=1pt]{->}{p6}{p7}
\ncline[arrowsize=2.7pt,nodesep=1pt]{->}{p7}{p8}

\psline[linewidth=2pt](.1,-0.26)(.7,-1.06)
\psline[linewidth=2pt](.7,-0.26)(.1,-1.06)

\end{pspicture}
 }{How a naive method goes awry. The middle picture in Figuur \ref{drieklassen} illustrate how this example can be treated correctly.}{tegenvb}

A solution to the LIMO question is now an immediate corollary.

\begin{corollary}
Let $m$ be a positive integer and $n = 2^m - 1$. The $n \times n$ matrix $A = (a_{i,j})$ over $\Z / 2\Z$ is given by $a_{i,j} = 1$ if $|i - j| = 1$ and $a_{i,j} = 0$ otherwise. Then $A^n = \mathbf{0}$.
\end{corollary}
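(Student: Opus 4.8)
The plan is to combine the Lemma and the Theorem directly; almost no new work is required. First I would record the observation already emphasised in the introduction: the matrix $A$ of the corollary is exactly the adjacency matrix of the path graph $P_n$, since $a_{i,j} = 1$ precisely when $i$ and $j$ are consecutive integers, that is, when $|i-j| = 1$.

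Next I would apply the Lemma with $k = n$: the $(x,y)$ entry of $A^n$ equals the number of walks of length $n$ in $P_n$ from vertex $x$ to vertex $y$. The Lemma counts these walks as an ordinary integer, whereas the corollary asks about $A$ over $\Z / 2\Z$; but the entries of $A$ are $0$ and $1$ regardless of whether we view them in $\Z$ or in $\Z / 2\Z$, and reduction modulo $2$ commutes with matrix addition and multiplication. Hence the $(x,y)$ entry of $A^n$ computed over $\Z / 2\Z$ is simply the number of walks of length $n$ from $x$ to $y$, read modulo $2$.

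Then I would invoke the Theorem with $k = n$, which is permitted because $k = n \geq n$: for every choice of vertices $x$ and $y$, the number of walks of length $n$ from $x$ to $y$ in $P_n$ is even, hence congruent to $0$ modulo $2$. Consequently every entry of $A^n$ over $\Z / 2\Z$ is zero, i.e. $A^n = \mathbf{0}$. This is the stronger statement that both model solutions establish, and it of course implies that $A$ is nilpotent, answering the original LIMO question.

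I do not expect any real obstacle here: all the substance lies in the Theorem, whose induction on $m$ — the three-class decomposition of the walks and in particular the reflection pairing used for Class~3 — carries the genuine difficulty. The only point deserving a sentence of care is the interplay between counting walks over $\Z$ and interpreting the matrix entries over $\Z / 2\Z$, and this is immediate once one notes that $A \bmod 2$ has the same $0/1$ pattern as the integer adjacency matrix, so that taking powers and reducing modulo $2$ can be done in either order.
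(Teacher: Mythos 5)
Your proposal is correct and follows essentially the same route as the paper: identify $A$ as the adjacency matrix of $P_n$, apply Lemma \ref{lemma} with $k = n$, and conclude from the Theorem that every entry of $A^n$ is even, hence zero in $\Z / 2\Z$. Your extra remark that reduction modulo $2$ commutes with taking matrix powers is a slightly more explicit version of the paper's phrase ``because we are working over $\Z / 2\Z$'', and is a welcome clarification rather than a deviation.
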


\begin{proof}
The matrix $A$ is the adjacency matrix of $P_n$. Write $B = A^n$. By Lemma \ref{lemma}, and because we are working over $\Z / 2\Z$, $b_{x,y}$ is the parity of the number of walks of length $n$ from $x$ to $y$ in $P_n$. This number is even by the above theorem, so $b_{x,y} = 0$ for all $x,y$. Consequently $A^n = B = \mathbf{0}$, as required.
\end{proof}

\bibliographystyle{plain}
\bibliography{limo10english}

\end{document}